\numberwithin{equation}{section}
\newcommand{\Cm}{\ensuremath{\operatorname{{\mathfrak{Cm}}}}}   
\newcommand{\Cf}{\ensuremath{\operatorname{{\mathfrak{Cf}}}}}   
\newcommand{\df}{\ensuremath{\overset{\mathrm{df}}{=}}}
\newcommand{\tiff}{if and only if \ }
\newcommand{\Iff}{\Longleftrightarrow}
\newcommand{\Implies}{\Rightarrow}
\newcommand{\klam}[1]{\ensuremath{\langle #1 \rangle}}
\newcommand{\set}[1]{\ensuremath{\{#1\}}}
\newcommand{\z}{\emptyset}
\newcommand{\tand}{\text{ and }}
\newcommand{\tor}{\text{ or }}
\newcommand{\nec}[1]{\ensuremath{[ #1 ]}}
\newcommand{\poss}[1]{\ensuremath{\klam{ #1 }}}
\newcommand{\lr}{\nec{\leq_1}\poss{\leq_2}}
\newcommand{\F}{\mathcal{F}}
\newcommand{\I}{\mathcal{I}}
\newcommand{\wlg}{w.l.o.g.~}
\newcommand{\ua}{\uparrow}
\newcommand{\uaE}{\uparrow_{\leq_1}}
\newcommand{\uaZ}{\uparrow_{\leq_2}}
\newcommand{\da}{\downarrow}
\newcommand{\onto}{\twoheadrightarrow}
\theoremstyle{plain}
\newtheorem{theorem}{Theorem}[section]
\newtheorem{lemma}[theorem]{Lemma}
\theoremstyle{definition}
\theoremstyle{remark}
\date{}
\title{Representation of lattice frames}
\author{
Ivo D\"untsch%
\thanks{Visiting professor at the School of Mathematics \& Computer Science, Fujian Normal University, Fuzhou, Fujian, China} \ ${}^{,}$
\thanks{Ivo D\"untsch gratefully acknowledges support by the Natural Sciences and Engineering Research Council of Canada Discovery Grant 250153 and  by the Bulgarian National Fund of Science, contract DN02/15/19.12.2016.}
 \\
{Dept of Computer Science} \\
{Brock University} \\
{St Catharines, ON, L2S 3A1, Canada} \\
\href{mailto:duentsch@brocku.ca}{duentsch@brocku.ca}
\and
Ewa Or{\l}owska%
\\
National Institute of Telecommunications \\
Szachowa 1 \\
04--894, Warszawa, Poland \\
\href{mailto:orlowska@itl.waw.pl}{orlowska@itl.waw.pl}
}
\begin{document}
\maketitle

\begin{abstract}
\noindent The aim of this note is to characterize those doubly ordered frames $X$ which are embeddable into the canonical frame of its Urquhart complex algebra.
\end{abstract}

\section{Urquhart's lattice representation}

Throughout, $\klam{L, +, \cdot 0, 1}$ is a bounded lattice. If no confusion can arise we shall identify algebras with their base set. The collection of proper filters of $L$ is denoted by $\F$, and the collection of proper ideals of $L$ is denoted by $\I$. A \emph{filter -- ideal pair} is a pair $\klam{F,I}$ where $F \in \F,\ I \in \I$, and $F \cap I = \z$. A filter -- ideal pair $\klam{F,I}$ is called \emph{maximal}, if $F$ is maximally disjoint to $I$ and $I$ is maximally disjoint to $F$. In other words, if $F' \in \F$ such that $F \subsetneq F'$, then $F' \cap I \neq \z$, and if $I' \in \I$ such that $I \subsetneq I'$, then $F \cap I' \neq \z$.

Let $X_L$ be the set of all maximal filter -- ideal pairs. To facilitate notation, if $x \in X_L$ with $x = \klam{F,I}$ we let $x_1 = F$ and $x_2 = I$. We define two relations $\leq_1, \leq_2$ on $X_L$ by $x \leq_i y$ \tiff $x_i \subseteq y_i$. Clearly, $x_1$ and $x_2$ are quasiorders on $X_L$. The structure $\klam{X_L, \leq_1, \leq_2}$ is called the \emph{Urquhart canonical frame of $L$}, denoted by $\Cf_U(L)$.

\begin{lemma}\label{lem:ext}
Each filter -- ideal pair can be extended to a maximal pair.
\end{lemma}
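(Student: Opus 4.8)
The plan is to apply Zorn's Lemma in the most direct way, extending both coordinates simultaneously. Starting from a filter--ideal pair $\langle F, I\rangle$, I would consider the collection $P$ of all filter--ideal pairs $\langle F', I'\rangle$ satisfying $F \subseteq F'$ and $I \subseteq I'$, ordered componentwise: $\langle F', I'\rangle \leq \langle F'', I''\rangle$ iff $F' \subseteq F''$ and $I' \subseteq I''$. Since $\langle F, I\rangle \in P$, the poset is nonempty. The two things to check are then that every chain in $P$ has an upper bound in $P$, and that a componentwise-maximal element of $P$ is exactly a maximal filter--ideal pair in the sense of the definition.

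For the chain condition I would take an arbitrary chain $\{\langle F_\alpha, I_\alpha\rangle\}_\alpha$ in $P$ and form the componentwise union $\langle \bigcup_\alpha F_\alpha,\ \bigcup_\alpha I_\alpha\rangle$. Because a chain is directed, the union of the filters is again a filter (upward closure is immediate, and any two elements lie in a common member of the chain, so their meet does too) and likewise the union of the ideals is an ideal. Mutual disjointness survives: an element of both unions would lie in $F_\beta \cap I_\beta$ for a single $\beta$ large enough in the chain, contradicting disjointness of that pair. Properness is then automatic from the bounded lattice structure, since every ideal contains $0$ and every filter contains $1$, so disjointness already forces $0 \notin \bigcup_\alpha F_\alpha$ and $1 \notin \bigcup_\alpha I_\alpha$. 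Hence the union lies in $P$ and bounds the chain.

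Finally, given a maximal element $\langle F^*, I^*\rangle$ of $P$ furnished by Zorn's Lemma, I would verify the two maximal-disjointness conditions. If some proper filter $F^{**} \supsetneq F^*$ were disjoint from $I^*$, then $\langle F^{**}, I^*\rangle$ would belong to $P$ and strictly dominate $\langle F^*, I^*\rangle$, contradicting maximality; so $F^*$ is maximally disjoint from $I^*$, and the symmetric argument on the ideal side shows $I^*$ is maximally disjoint from $F^*$. Thus $\langle F^*, I^*\rangle$ is the desired maximal pair.

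I expect the only delicate points to be bookkeeping rather than conceptual: confirming that the componentwise union of a chain genuinely stays in $P$ (preservation of both properness and mutual disjointness in the limit) and that componentwise maximality yields the two-sided maximal disjointness the definition demands. An alternative that avoids the simultaneous extension is to enlarge the coordinates in turn --- first extend $F$ to a filter maximally disjoint from $I$, then extend $I$ to an ideal maximally disjoint from that new filter; the key observation making this work is that maximal disjointness of a filter from $I$ is preserved when $I$ is enlarged, so the first coordinate remains maximal once the second has grown.
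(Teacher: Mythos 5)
Your proof is correct; the paper states this lemma without proof, and your Zorn's Lemma argument on componentwise-ordered pairs (with the observation that disjointness plus boundedness of $L$ keeps the union of a chain proper, and that componentwise maximality gives both maximal-disjointness conditions) is exactly the standard argument the paper is implicitly relying on. Your closing alternative --- extending the filter first and then the ideal --- is also sound, since a filter maximally disjoint from $I$ remains maximally disjoint from any larger ideal.
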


A \emph{doubly ordered frame} is a structure $\klam{X, \leq_1, \leq_2}$ such that
\begin{enumerate}
\item $\leq_1$ and $\leq_2$ are quasiorders on $X$.
\item If $x \leq_1 y$ and $x \leq_2 y$, then $x = y$ for all $x,y \in X$.
\end{enumerate}

For $Y \subseteq X$,
\begin{align}
l(Y) &\df \set{x: \ua_1 x \cap Y = \z} = \nec{\leq_1}(-Y), \label{def:l}\\
r(Y) & \df \set{x: \ua_2 x \cap Y = \z} = \nec{\leq_2}(-Y). \label{def:r}
\end{align}

$Y$ is called a \emph{stable set}, if $Y = l(r(Y))$. The collection of stable sets is denoted by $L_X$. Observe that
\begin{gather}
l(r(Y)) = l(\nec{\leq_2}(-Y)) = \nec{\leq_1}(-\nec{\leq_2}(-Y)) = \nec{\leq_1}\poss{\leq_2}(Y)
\label{lr}.
\end{gather}

\begin{lemma} \label{DOS:3} \cite{urq78} Let $(X, \leq_1, \leq_2)$ be a doubly ordered frame.
\begin{enumerate}
\item The mappings $l$ and $r$ form a Galois connection between the lattice of $\leq_1$--increasing subsets of $X$ and the lattice of $\leq_2$--increasing subsets of $X$.
\item If $Y$ is $\leq_2$ increasing, then $l(Y)$ is a stable set.
    \end{enumerate}
\end{lemma}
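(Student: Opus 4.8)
The plan is to derive both items from a single observation: once we restrict to increasing sets, each of the two inclusions that would witness a Galois connection collapses to the same disjointness condition. I expect to use only reflexivity and transitivity of $\leq_1,\leq_2$; the separation axiom (2) of a doubly ordered frame will not be needed.

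First I would pin down the types of the two operators. For an arbitrary $Y\subseteq X$ the set $l(Y)$ is $\leq_1$-increasing: if $x\in l(Y)$ and $x\leq_1 w$, then transitivity gives $\ua_1 w\subseteq\ua_1 x$, so $\ua_1 w\cap Y\subseteq\ua_1 x\cap Y=\z$ and hence $w\in l(Y)$. By the symmetric argument $r(Y)$ is always $\leq_2$-increasing. Thus $l$ maps $\leq_2$-increasing sets to $\leq_1$-increasing sets and $r$ maps $\leq_1$-increasing sets to $\leq_2$-increasing sets, which is the typing required in item (1); antitonicity of both maps with respect to $\subseteq$ is read off directly from \eqref{def:l} and \eqref{def:r}.

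The crucial step is the reduction to disjointness. If $Y$ is $\leq_1$-increasing, I claim $Y\subseteq l(Z)\Iff Y\cap Z=\z$. For the forward implication, if $y\in Y\subseteq l(Z)$ then $\ua_1 y\cap Z=\z$, and reflexivity ($y\in\ua_1 y$) forces $y\notin Z$; conversely, if $Y\cap Z=\z$ then increasingness gives $\ua_1 y\subseteq Y$, whence $\ua_1 y\cap Z=\z$ and $y\in l(Z)$, for every $y\in Y$. By the symmetric statement, if $Z$ is $\leq_2$-increasing then $Z\subseteq r(Y)\Iff Y\cap Z=\z$. Consequently, for $Y$ $\leq_1$-increasing and $Z$ $\leq_2$-increasing, $Y\subseteq l(Z)\Iff Y\cap Z=\z\Iff Z\subseteq r(Y)$, which is precisely the adjunction defining a Galois connection between the two lattices of increasing sets; this establishes item (1).

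For item (2) I would invoke the formal identity $l\circ r\circ l=l$ valid in any Galois connection. Concretely, let $Y$ be $\leq_2$-increasing and put $W=l(Y)$, which is $\leq_1$-increasing by the first step. Instantiating the adjunction with $Z=r(W)$ gives the unit $W\subseteq l(r(W))$, while the dual unit $Y\subseteq r(l(Y))$ together with antitonicity of $l$ gives $l(r(l(Y)))\subseteq l(Y)$; combining the two yields $l(Y)=l(r(l(Y)))$, i.e. $l(Y)$ is stable. The only point needing care is that stability in \eqref{lr} is phrased with the operators on all subsets, so I would check that the composite here is literally $l(r(\cdot))$ applied to the set $l(Y)$, which it is. The main obstacle is thus not a computation but bookkeeping: keeping straight that $l$ returns $\leq_1$-increasing sets (so that the Galois connection is correctly oriented), and noticing that the two a priori different inclusions both reduce to $Y\cap Z=\z$ only because one direction uses reflexivity and the other uses increasingness.
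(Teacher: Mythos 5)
Your proof is correct. Note that the paper states Lemma \ref{DOS:3} without proof, attributing it to Urquhart (1978), so there is no in-paper argument to compare against; your reduction of both adjunction inequalities to the single condition $Y\cap Z=\z$ (reflexivity for one direction, increasingness for the other) is the standard route, the derivation of stability from $l\circ r\circ l=l$ is sound, and your observation that the separation axiom of doubly ordered frames plays no role here is accurate.
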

Thus, if $Y$ is $\leq_1$ increasing and $Z$ is $\leq_2$ increasing, then $Y \subseteq l(Z)$  \tiff  $Z\subseteq r(Y)$.

For $Y,Z \in L_X$ let
\begin{align}
Y \lor_{X} Z &\df \nec{\leq_1}\poss{\leq_2}(Y \cup Z), \label{compsup} \\
Y \land_{X} Z &\df Y \cap Z. \label{compinf}
\end{align}
\begin{theorem}\label{thm:repL} \cite{urq78}
The structure $\klam{L_X, \lor_{X}, \land_{X}, \z, X}$ is a complete bounded lattice.
\end{theorem}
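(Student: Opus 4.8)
The plan is to identify $L_X$ with the family of closed sets of the closure operator induced by the Galois connection $(l,r)$, and then to read the lattice structure off from that. Write $c(Y) \df l(r(Y))$. By Lemma \ref{DOS:3}(1) the maps $l$ and $r$ form an (antitone) Galois connection between the $\leq_1$-increasing and the $\leq_2$-increasing subsets of $X$, so the standard facts about Galois connections give that $c$ is a closure operator on the complete lattice of $\leq_1$-increasing subsets of $X$ ordered by $\subseteq$: it is extensive ($Y \subseteq c(Y)$), monotone, and idempotent ($c(c(Y)) = c(Y)$, since $lrl = l$ and $rlr = r$). A set $Y$ is stable exactly when $Y = c(Y)$, i.e. when $Y$ is a fixed point of $c$, and Lemma \ref{DOS:3}(2) confirms that $c(Y)$ is always stable, so the stable sets are precisely the range of $c$. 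Note also that every stable set is $\leq_1$-increasing, because $l(Z) = \nec{\leq_1}(-Z)$ is $\leq_1$-increasing for every $Z$; hence $L_X$ sits inside the ambient lattice on which $c$ acts.

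First I would show that $L_X$ is closed under arbitrary intersections. Given stable sets $\{Y_j\}_j$, the intersection $\bigcap_j Y_j$ is $\leq_1$-increasing, and extensivity gives $\bigcap_j Y_j \subseteq c(\bigcap_j Y_j)$; conversely $\bigcap_j Y_j \subseteq Y_k$ for each $k$ yields $c(\bigcap_j Y_j) \subseteq c(Y_k) = Y_k$ by monotonicity and stability of $Y_k$, whence $c(\bigcap_j Y_j) \subseteq \bigcap_j Y_j$. Thus $\bigcap_j Y_j$ is stable and is the greatest stable set contained in every $Y_j$; in particular $Y \land_X Z = Y \cap Z$ is the binary meet, matching \eqref{compinf}. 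Since a poset that is closed under arbitrary meets and has a greatest element is automatically a complete lattice, it then remains only to pin down the bounds and the joins.

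For the bounds I would compute directly that $r(\z) = X$ and $l(X) = \z$ (using $x \in \ua_1 x$), so that $c(\z) = \z$ and $\z \in L_X$ is the least stable set, and dually $r(X) = \z$ and $l(\z) = X$, so that $X \in L_X$ is the greatest. For joins, the least stable set containing two stable sets $Y$ and $Z$ is $c(Y \cup Z)$: it contains $Y \cup Z$ by extensivity, and any stable $W$ with $Y \cup Z \subseteq W$ satisfies $c(Y \cup Z) \subseteq c(W) = W$. By \eqref{lr} we have $c(Y \cup Z) = l(r(Y \cup Z)) = \nec{\leq_1}\poss{\leq_2}(Y \cup Z)$, which is exactly $Y \lor_X Z$ of \eqref{compsup}; replacing $Y \cup Z$ by an arbitrary union yields the arbitrary joins. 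Assembling these facts, $\klam{L_X, \lor_X, \land_X, \z, X}$ is a complete bounded lattice.

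The only step that is more than bookkeeping is the translation between the two descriptions of the operations. One has to be sure that the concrete formulas \eqref{compsup}--\eqref{compinf} coincide with the abstract join and meet of the closed-set lattice, i.e. that the intersection of stable sets is again stable and that $\nec{\leq_1}\poss{\leq_2}(Y \cup Z)$ is the closure of the union. Both of these rest squarely on the closure-operator properties of $c$, which in turn come entirely from the Galois connection of Lemma \ref{DOS:3}. Once that dictionary is established there is no real obstacle, and completeness is immediate from the closure of $L_X$ under arbitrary intersections.
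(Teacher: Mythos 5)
Your proof is correct and is the standard argument: the paper itself states Theorem \ref{thm:repL} without proof (citing Urquhart), and your route through the closure operator $c = lr$ arising from the Galois connection of Lemma \ref{DOS:3} --- closure of $L_X$ under arbitrary intersections, $\z$ and $X$ as fixed points, and $c(Y \cup Z) = \nec{\leq_1}\poss{\leq_2}(Y \cup Z)$ as the join --- is exactly what that lemma is set up to deliver. The only point worth making explicit is that $Y \cup Z$ is $\leq_1$-increasing (since stable sets are in the range of $l$), so that $c$ genuinely acts as a closure operator on it; you note this, so nothing is missing.
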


We call this structure the \emph{Urquhart complex algebra of $X$}, and denote it by $\Cm_U(X)$.

\begin{theorem}\label{thm:urq} \cite{urq78}
Define $h: L \to 2^{X_L}$ by $h(a) \df \set{x \in X_L: a \in x_1}$. Then $h$ is a lattice embedding into $\Cm_U\Cf_U(L)$.
\end{theorem}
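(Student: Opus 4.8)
The plan is to verify three things: that $h$ lands in $L_{X_L}$ (each $h(a)$ is a stable set), that $h$ is a lattice homomorphism, and that $h$ is injective. Two of the homomorphism conditions are immediate from the defining properties of filters, so I would dispatch them first. Since each $x_1$ is a filter, $a \cdot b \in x_1$ iff $a \in x_1$ and $b \in x_1$, whence $h(a \cdot b) = h(a) \cap h(b) = h(a) \land_X h(b)$. For the bounds, a proper filter omits $0$ and contains $1$, so $h(0) = \z$ and $h(1) = X_L$, which are the bottom and top of $\Cm_U(X_L)$ by Theorem \ref{thm:repL}.

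Next I would show $h(a) \in L_{X_L}$. Each $h(a)$ is $\leq_1$--increasing: if $a \in x_1$ and $x \leq_1 y$, then $x_1 \subseteq y_1$ gives $a \in y_1$. By the Galois connection in Lemma \ref{DOS:3} we have $h(a) \subseteq l(r(h(a)))$, so only $l(r(h(a))) \subseteq h(a)$ needs work, and I would prove its contrapositive: assuming $a \notin x_1$, I produce a witness $y \in \ua_1 x \cap r(h(a))$. Since $x_1$ is up-closed and $a \notin x_1$, we get $x_1 \cap \da a = \z$ (and $a \neq 1$ as $1 \in x_1$), so $\klam{x_1, \da a}$ is a proper filter--ideal pair; by Lemma \ref{lem:ext} extend it to a maximal pair $y$ with $x_1 \subseteq y_1$ and $a \in y_2$. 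Then $x \leq_1 y$, and for every $z$ with $y \leq_2 z$ we have $a \in y_2 \subseteq z_2$, so $a \notin z_1$ by disjointness; hence $\ua_2 y \cap h(a) = \z$, i.e. $y \in r(h(a))$. This witnesses $x \notin l(r(h(a)))$, as required.

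For join preservation, the inclusion $h(a) \lor_X h(b) \subseteq h(a+b)$ is automatic: $h(a+b)$ is a stable set containing both $h(a)$ and $h(b)$ (as $a,b \leq a+b$ and filters are up-closed), and $\lor_X$ is the least such by Theorem \ref{thm:repL}. For the reverse inclusion I would argue pointwise, unfolding $h(a) \lor_X h(b) = \nec{\leq_1}\poss{\leq_2}(h(a) \cup h(b)) = l(r(h(a) \cup h(b)))$ via \eqref{lr}. Assume $a + b \in x_1$; I must find, for each $y$ with $x \leq_1 y$, some $z$ with $y \leq_2 z$ and $z \in h(a) \cup h(b)$. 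Fix such a $y$. Then $a+b \in y_1$, so $a+b \notin y_2$; since $y_2$ is an ideal closed under $+$, not both $a,b$ lie in $y_2$, say $a \notin y_2$ (so $a \neq 0$). Then $\klam{\ua a, y_2}$ is a proper filter--ideal pair, which Lemma \ref{lem:ext} extends to a maximal $z$ with $a \in z_1$ and $y_2 \subseteq z_2$; thus $y \leq_2 z$ and $z \in h(a)$, giving $\ua_2 y \cap (h(a) \cup h(b)) \neq \z$. Hence $x \in l(r(h(a) \cup h(b)))$.

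Finally, for injectivity suppose $a \neq b$, say $a \not\leq b$ (the other case is symmetric). Then $\ua a \cap \da b = \set{c : a \leq c \leq b} = \z$, and (noting $a \neq 0$, $b \neq 1$) $\klam{\ua a, \da b}$ is a proper filter--ideal pair; extend it to a maximal pair $x$. Then $a \in x_1$ while $b \in x_2$ forces $b \notin x_1$, so $x \in h(a) \setminus h(b)$ and $h(a) \neq h(b)$. The main obstacle throughout is not any single computation but the repeated use of Lemma \ref{lem:ext} to turn an algebraic non-membership (``$a$ is outside this filter'', or ``$a+b$ in the filter forces $a$ or $b$ out of the ideal'') into a genuine point of $X_L$ carrying exactly the right order relationships; choosing the filter/ideal seeds and checking their disjointness and properness is the delicate part.
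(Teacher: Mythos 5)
Your proof is correct: the verification that each $h(a)$ is stable, the preservation of meets, joins and bounds, and the injectivity argument all go through, with the filter--ideal extension lemma (Lemma \ref{lem:ext}) applied correctly at each point where a maximal pair witness is needed. The paper itself states Theorem \ref{thm:urq} only with a citation to Urquhart and gives no proof to compare against; your argument is the standard one from that source, so there is nothing further to reconcile.
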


It was shown by \citet{ch14} that $\Cm_U\Cf_U(L)$ is isomorphic to the canonical extension of $L$ in the sense of \cite{gh01}.

\section{Representability of lattice frames}

\citet{urq78} proved that every doubly ordered frame endowed with a suitable topology can be embedded into the dual frame of its dual lattice. We show  below on a first order level that his conditions suffice to prove that a suitably defined lattice frame can be embedded into the canonical frame of its complex algebra.

A \emph{lattice frame} is a doubly ordered frame $\klam{X, \leq_1, \leq_2}$ which satisfies the following conditions:
\begin{enumerate}
\renewcommand{\theenumi}{\ensuremath{\mathrm{LF}_\arabic{enumi}}}
\item Each element of $X$ is below a $\leq_1$ maximal one and a $\leq_2$ maximal one, \label{fr0}
\item $x \not\leq_1 y \Implies (\exists z)[y \leq_1 z \tand (\forall w)(x \leq_1 w \Implies z \not\leq_2 w)]$, \label{fr1}
\item $x \not\leq_2 y \Implies (\exists z)[y \leq_2 z \tand (\forall w)(x \leq_2 w \Implies z \not\leq_1 w)]$. \label{fr2}
\end{enumerate}


\ref{fr1} and \ref{fr2} are the conditions given by \citet{urq78} for lattices of finite length. In such lattices, they guarantee embeddability of $X$ into $\Cf\Cm_U(X)$. They hold in all canonical frames:

\begin{theorem}\label{thm:latframe1} \cite{urq78}
If $L$ is a lattice, then $X_L$ is a lattice frame. 
\end{theorem}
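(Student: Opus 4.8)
The plan is to verify the three lattice-frame conditions \ref{fr0}, \ref{fr1}, \ref{fr2} directly for the canonical frame $X_L = \Cf_U(L)$, translating each first-order condition about the quasiorders $\leq_1,\leq_2$ back into a statement about the filters $x_1$ and ideals $x_2$ of maximal filter--ideal pairs. Recall that $x \leq_i y$ means $x_i \subseteq y_i$, so I shall read $x \not\leq_1 y$ as $x_1 \not\subseteq y_1$, i.e.\ there is some $a \in x_1$ with $a \notin y_1$, and similarly for $\leq_2$. The central tool throughout will be Lemma~\ref{lem:ext}, which lets me extend any filter--ideal pair to a maximal one.

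For \ref{fr0} I would argue that each maximal pair $x = \klam{x_1,x_2}$ lies below a $\leq_1$-maximal pair and a $\leq_2$-maximal pair. To find a $\leq_1$-maximal element above $x$ one wants a maximal pair $y$ whose filter $y_1$ is as large as possible while keeping the pair maximal; an application of Zorn's Lemma to the poset of filters containing $x_1$ that remain part of some maximal pair, together with Lemma~\ref{lem:ext}, should produce such a $y$ with $x_1 \subseteq y_1$, hence $x \leq_1 y$, and dually for $\leq_2$.

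The heart of the proof is \ref{fr1} (with \ref{fr2} following by the evident filter/ideal duality). Assume $x \not\leq_1 y$, so there is $a \in x_1 \setminus y_1$. I want to produce a maximal pair $z$ with $y \leq_1 z$ (i.e.\ $y_1 \subseteq z_1$) and such that $(\forall w)(x \leq_1 w \Rightarrow z \not\leq_2 w)$; unpacking the conclusion, $x \leq_1 w$ means $x_1 \subseteq w_1$ and $z \not\leq_2 w$ means $z_2 \not\subseteq w_2$, so I must guarantee $z_2$ contains an element escaping $w_2$ whenever $w_1 \supseteq x_1$. The natural candidate is to build $z$ from the filter generated by $y_1$ together with the requirement that $a \in z_2$: since $a \notin y_1$, the pair $\klam{y_1, (a]}$ (or the filter--ideal pair obtained by putting $y_1$ against the principal ideal of $a$) is a genuine filter--ideal pair, which I extend to a maximal $z$ via Lemma~\ref{lem:ext}. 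Then $a \in z_2$, and for any $w$ with $x_1 \subseteq w_1$ we have $a \in x_1 \subseteq w_1$, so $a \in w_1$; since $w_1 \cap w_2 = \z$, this forces $a \notin w_2$, whence $a \in z_2 \setminus w_2$ gives $z_2 \not\subseteq w_2$, i.e.\ $z \not\leq_2 w$, as required.

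The step I expect to be most delicate is ensuring the extensions behave correctly: one must check that the filter--ideal pair used to construct $z$ is honestly disjoint (that $y_1$ and the ideal generated by $a$ meet only emptily), which relies precisely on $a \notin y_1$ and the fact that filters are upward closed. A secondary subtlety in \ref{fr0} is confirming that the Zorn's-Lemma maximal object is $\leq_1$-maximal as an element of the frame, not merely that its filter is a maximal filter of $L$; I would need to argue that no maximal pair strictly $\leq_1$-dominates it. Once \ref{fr1} is settled, \ref{fr2} is obtained verbatim by interchanging the roles of filters and ideals, $\leq_1$ and $\leq_2$, and using the dual form of Lemma~\ref{lem:ext}.
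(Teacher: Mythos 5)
Your proposal is correct and follows essentially the same route as the paper: for \ref{fr1} you build $z$ by extending the filter--ideal pair $\klam{y_1, \da a}$ (disjoint because $a \notin y_1$ and $y_1$ is upward closed) to a maximal pair via Lemma~\ref{lem:ext}, which is exactly the paper's construction, merely presented directly rather than by contradiction; \ref{fr0} and \ref{fr2} are handled the same way in both. Your explicit remark that the Zorn's-Lemma argument for \ref{fr0} must yield a $\leq_1$-maximal \emph{pair} rather than merely a maximal filter is a point the paper glosses over, but it is not a different approach.
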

\begin{proof}
\ref{fr0}: By Zorn's Lemma, each filter (ideal) is contained in a maximal one.

\ref{fr1} Assume that \ref{fr1} is not true. Then,
\begin{gather}\label{fr1eq1}
(\exists x,y)[x \not\leq_1 y \tand (\forall z)(y \leq_1 z \Implies (\exists w)(x \leq_1 w \tand z \leq_2 w))].
\end{gather}

Let $x,y \in X$ witness \eqref{fr1eq1}. Since $x_1 \not\subseteq y_1$, there is some $a \in x_1, a \not\in y_1$. Thus, $\da_1 a \cap y_1 = \z$, and so there is a maximal pair $z$ such that $y_1 \subseteq z_1$ and $a \in z_2$. The assumption \eqref{fr1eq1} implies that there is a maximal pair $w$ such that $x_1 \subseteq w_1$ and $z_2 \subseteq w_2$. Since $w$ is a maximal pair, $w_1 \cap w_2 = \z$ which contradicts $a \in x_1 \cap z_2$.

\ref{fr2}: This is shown similarly: Assume that \ref{fr2} is not true. Then,
\begin{gather}\label{fr1eq2}
(\exists x,y)[x \not\leq_2 y \tand (\forall z)(y \leq_2 z \Implies (\exists w)(x \leq_2 w \tand z \leq_1 w))].
\end{gather}
Since $x_2 \not\subseteq y_2$, there is some $a \in x_2, a \not\in y_2$. Thus, $\ua_2 a \cap y_2 = \z$, and so there is a maximal pair $z$ such that $y_2 \subseteq z_2$ and $a \in z_1$. The assumption \eqref{fr1eq2} implies that there is a maximal pair $w$ such that $x_2 \subseteq w_2$ and $z_1 \subseteq w_1$. Since $w$ is a maximal pair, $w_1 \cap w_2 = \z$ which contradicts $a \in x_2 \cap z_1$.
\end{proof}


\begin{theorem}\label{thm:ep}
Let $X$ be a lattice frame. Then, $X$ is embeddable into $\Cf_U\Cm_U(X)$.
\end{theorem}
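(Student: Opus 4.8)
The plan is to define $\varphi\colon X\to\Cf_U\Cm_U(X)$ by sending each point to the pair of stable sets it ``generates''. For $x\in X$ set $F_x=\set{Y\in L_X:x\in Y}$ and $I_x=\set{Y\in L_X:\ua_2 x\cap Y=\z}=\set{Y\in L_X:x\in r(Y)}$, and put $\varphi(x)=\klam{F_x,I_x}$. That $F_x$ is a proper filter of $\Cm_U(X)$ is clear, as it is closed under $\land_X=\cap$, is upward closed, and omits $\z$. For $I_x$, closure under $\lor_X$ follows since $Y,Z\in I_x$ give $x\in r(Y)\cap r(Z)=r(Y\cup Z)=r(l(r(Y\cup Z)))=r(Y\lor_X Z)$, using the Galois connection of Lemma~\ref{DOS:3}; and $X\notin I_x$ because $x\in\ua_2 x$. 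The same reflexivity gives $F_x\cap I_x=\z$, so $\varphi(x)$ is a filter--ideal pair.

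The technical heart of the argument is two principality lemmas. First, $\ua_1 x$ is itself stable, hence is the least stable set containing $x$, so that $F_x$ is the principal filter it generates. This follows from \ref{fr1}: if $w\in l(r(\ua_1 x))$ but $x\not\leq_1 w$, applying \ref{fr1} to $x,w$ yields $z$ with $w\leq_1 z$ and $\ua_1 x\cap\ua_2 z=\z$, whence $z\in\ua_1 w\cap r(\ua_1 x)$, contradicting $w\in l(r(\ua_1 x))$. Dually, $\tau(x)\df l(\ua_2 x)$ is stable by Lemma~\ref{DOS:3} and is the largest member of $I_x$, so $I_x$ is the principal ideal it generates; moreover \ref{fr2} yields $r(l(\ua_2 x))=\ua_2 x$, that is, $r(\tau(x))=\ua_2 x$.

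Order preservation is automatic: stable sets are $\leq_1$--increasing, so $x\leq_1 y$ gives $F_x\subseteq F_y$, and $l$ is antitone, so $x\leq_2 y$ gives $\tau(x)\subseteq\tau(y)$, i.e. $I_x\subseteq I_y$. For reflection I would invoke the frame conditions once more. If $x\not\leq_1 y$, then \ref{fr1} supplies $z$ with $y\leq_1 z$ and $x\in l(\ua_2 z)$, while $y\notin l(\ua_2 z)$ since $z\in\ua_1 y\cap\ua_2 z$; as $l(\ua_2 z)$ is stable, it witnesses $F_x\not\subseteq F_y$. Symmetrically, for $x\not\leq_2 y$ condition \ref{fr2} yields $\tau(x)\in I_x\setminus I_y$. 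Injectivity then comes for free: $\varphi(x)=\varphi(y)$ forces $x\leq_1 y$ and $x\leq_2 y$, hence $x=y$ by the second frame axiom.

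The step I expect to be the main obstacle is showing that $\klam{F_x,I_x}$ is \emph{maximal}, so that $\varphi(x)$ is a genuine point of $\Cf_U\Cm_U(X)$; this is precisely where the finite-length reasoning of \citet{urq78} must be upgraded, and where I would look to \ref{fr0} to furnish the maximal witnesses needed in the infinite case. Explicitly, one must check (a) that for each stable $Y\notin F_x$ the filter generated by $F_x\cup\set{Y}$ meets $I_x$, and (b) that for each stable $Y\notin I_x$ the ideal generated by $I_x\cup\set{Y}$ meets $F_x$. The two principality lemmas collapse these to set inclusions, and here the second frame axiom enters as $\ua_1 x\cap\ua_2 x=\set{x}$. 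For (a) the generated filter consists of the stable sets containing $\ua_1 x\cap Y$, so it meets $I_x$ exactly when $\ua_1 x\cap Y\subseteq\tau(x)$; this holds because any $u\in\ua_1 x\cap Y$ possessing a $\leq_1$--successor $v\in\ua_2 x$ would force $v=x$, hence $u\leq_1 x$ and, by $\leq_1$--increasingness of $Y$, the contradiction $x\in Y$. For (b) the generated ideal consists of the stable sets contained in $\tau(x)\lor_X Y$, so it meets $F_x$ exactly when $x\in\tau(x)\lor_X Y$; using $r(\tau(x))=\ua_2 x$ one has $\tau(x)\lor_X Y=l(\ua_2 x\cap r(Y))$, and since $\ua_1 x\cap\ua_2 x=\set{x}$, the point $x$ lies in this join precisely when $x\notin r(Y)$, i.e. precisely when $Y\notin I_x$. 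Making these reductions rigorous, and confirming that no maximality defect slips through in the absence of the finite-length hypothesis, is the delicate part of the proof.
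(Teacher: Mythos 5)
Your argument is correct, and its skeleton coincides with the paper's: the same map $x \mapsto \klam{F_x, I_x}$, the same identification of $F_x$ as the principal filter over $\ua_1 x$ (shown stable via \ref{fr1}) and of $I_x$ as the principal ideal under $l(\ua_2 x)$ (the paper's $\nec{\leq_1}(W_x)$), and essentially the same use of the double-ordering axiom, through $\ua_1 x \cap Y$, for maximality on the filter side. Where you genuinely diverge is the ideal-maximality step. The paper picks a single point $t \in Y \setminus l(\ua_2 x)$, forms $l(\ua_2 x) \lor_{X} \uaE t$, unfolds the condition $x \in lr(\cdots)$ into a first-order formula, and finishes with a two-case analysis ($y = x$ versus $x \lneq_1 y$) that invokes \ref{fr2} a second time. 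You instead first prove the dual identity $r(l(\ua_2 x)) = \ua_2 x$ from \ref{fr2} (the mirror image of the stability of $\ua_1 x$), which lets you compute $\tau(x) \lor_{X} Y = l(\ua_2 x \cap r(Y))$ in one line and read off $x \in \tau(x) \lor_{X} Y \Iff x \notin r(Y)$ from $\ua_1 x \cap \ua_2 x = \set{x}$; this is tighter, more symmetric, and shows the generated ideal meets $F_x$ \emph{exactly} when $Y \notin I_x$. Two further remarks. First, you also verify order reflection ($x \not\leq_i y$ implies the images are not related), which the paper's proof does not spell out but which is needed for an embedding of frames in the strong sense, so this is a welcome addition rather than a detour. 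Second, your anticipation that \ref{fr0} must be invoked to ``furnish maximal witnesses in the infinite case'' is unfounded: neither your argument nor the paper's uses \ref{fr0} at any point, and the reductions you flag as delicate all go through exactly as you describe.
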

\begin{proof}
Let $L_X$ be the lattice of the stable sets of $X$. Define $k_1, k_2: X \to 2^{L_X}$ by $k_1(x) = \set{Y \in L_X: x \in Y}$, $k_2(x) = \set{Y \in L_X: x \in r(Y)}$, and $k(x) = \klam{k_1(x), k_2(x)}$.

We shall show that
\begin{enumerate}
\item $k$ preserves $\leq_1$ and $\leq_2$.
\item $k$ is injective.
\item $k(x)$ is a maximal pair of $\Cm_U(X)$.
\end{enumerate}
We first show that $k_1(x)$ is the principal filter $F_x$ of $L_X$ generated by $\ua_1(x)$. If $Y \in k_1(x)$, then  $Y \in L_X$ and $x \in Y$. Since $lr$ is a closure operator on the $\leq_1$ -- closed sets, $lr(\ua_{\leq_1} x) \subseteq lr(Y) =  Y$, and all that is left to show is that $lr(\ua_{\leq_1} x) \subseteq \ua_{\leq_1}$. Consider
\begin{align}
y \in lr(\ua_{\leq_1} x) &\Iff y \in \nec{\leq_{\leq_1}}\poss{\leq_2}\ua_{\leq_1} x , \\
&\Iff \ua_{\leq_1} y \subseteq \poss{\leq_2}\ua_{\leq_1} x, \\
&\Iff (\forall z)[y \leq_{\leq_1} z \Implies (\exists t)(x \leq_{\leq_1} t \tand z \leq_2 t)], \\
&\Iff (\forall z)[y \leq_{\leq_1} z \Implies \ua_{\leq_1} x \cap \ua_{\leq_2} z \neq \z]. \label{t1}
\end{align}
Let $y \in lr(\ua_{\leq_1} x)$ and assume that $x \not\leq_1 y$. By \ref{fr1}, there is some $z$ such that $y \leq_1 z$ and $\ua_{\leq_1} x \cap \ua_{\leq_2} z = \z$. This contradicts \eqref{t1}.


Preservation of $\leq_1$ and $\leq_2$ is immediate. For injectivity, let $x \neq y$ and assume $k(x) = k(y)$, i.e. $k_1(x) = k_1(y)$ and $k_2(x) = k_2(y)$. Then, $k_1(x) = k_1(y)$ implies $\ua_1 x = \ua_1 y$, i.e. $x \leq_1 y$ and $y \leq_1 x$. Since $X$ is doubly ordered we may suppose \wlg that $x \not\leq_2 y$. By \ref{fr2} there is some $z$ such that $y \leq_2 z$ and $\ua_{\leq_2} x ~ \cap \ua_{\leq_1} z = \z$. Then, $x \in r(\ua_{\leq_1} z)$ and $y \not\in \ua_{\leq_1} z$, contradicting $k_2(x) = k_2(y)$.

Clearly, $k_1(x)$ is a filter of $L_X$, $k_2(x)$ is an ideal, and $k_1(x) \cap k_2(x) = \z$. All that is left to show is that $k(x)$ is a maximal pair. Assume that $F$ is a filter of $L_X$ strictly containing $k_1(x)$ and $F \cap k_2(x) = \z$. Let $Y \in F \setminus k_1(x)$. Since $\ua_1 x \in k_1(x)$ and $F$ is a filter, it follows that $Z \df Y \cap \ua_1 x \in F$ and $Z \subseteq \ua_1 x$. Then, $t \in Z$ implies $x \lneq_1 t$, and thus, $x \not\leq_2 t$ for all $t \in Z$. By the assumption we have $Z \not\in k_2(x)$, and thus, $x \not\in r(Z)$. Hence, $x \not\in \nec{\leq_2}(-Z)$, and there is some $z$ such that $x \leq_2 z$ and $z \in Z$. This contradicts $x \not\leq_2 t$ for all $t \in Z$, and thus, $k_1(x)$ is maximally disjoint from $k_2(x)$.

Finally, we show that $k_2(x)$ is maximally disjoint from $k_1(x)$. Let $W_x \df \set{y: x\not\leq_2 y}$; clearly, $\uaZ x \cap W_x = \z$. If $Y \subseteq X$ such that $x \in r(Y)$, then $\uaZ x \cap Y = \z$, and thus, $Y \subseteq W_x$. Therefore, $W_x$ is the largest subset of $X$ disjoint from $\uaZ x$, and, clearly, $W_x = \poss{\leq_2}(W_x)$. It follows that $\lr(W_x) = \nec{\leq_1}(W_x)$ is the largest stable set $Y$ for which $x \in r(Y)$. Hence, $k_2(x)$ is the ideal of $L_X$ generated by $\nec{\leq_1}(W_x)$.

Suppose that $I$ is an ideal of $L_X$ which strictly contains $k_2(x)$. Our aim is to show that $I \cap k_1(x) \neq \z$, in other words, there is some $Y \in I$ such that $x \in Y$. If $Y \in I$ and $Y \not\subseteq \nec{\leq_1}(W_x)$, there is some $t \in Y \setminus \nec{\leq_1}(W_x)$. Since $I$ is an ideal, $Y \in I$ and $\uaE t \subseteq Y$, we have $\uaE t \in I$, and therefore $\nec{\leq_1}(W_x) \lor_X \uaE t = \lr(\nec{\leq_1}(W_x) \cup \uaE t) \in I$. Since $t \not\in \nec{\leq_1}(W_x)$,  there is some $s$ such that $t \leq_1 s$ and $x \leq_2 s$. Now,
\begin{align*}
x \in \lr(\nec{\leq_1}(W_x) \cup \uaE t) & \Iff \uaE x \subseteq \poss{\leq_2}(\nec{\leq_1}(W_x) \cup \uaE t) \\
&\Iff (\forall y)[x \leq_1 y \Implies (\exists z)(y \leq_2 z \tand z \in (\nec{\leq_1}(W_x) \cup \uaE t))] \\
&\Iff (\forall y)[x \leq_1 y \Implies (\exists z)(y \leq_2 z \tand [(\forall u)(z \leq_1 u \Implies x\not\leq_2 u) \tor t \leq_1 z])], \\
&\Iff (\forall y)[x \leq_1 y \Implies (\exists z)(y \leq_2 z \tand [\uaE z ~\cap \uaZ x = \z \tor t \leq_1 z])]
\end{align*}
For the right hand side, we consider two cases:
\begin{enumerate}
\item $x = y$: Then, setting $z = s$, we obtain $x \leq_2 z$ and $t \leq_1 z$.
\item $x \lneq_1 y$: Then, $x \not\leq_2 y$, and \ref{fr2} implies that there is some $z$ such that $\uaZ x \cap \uaE z = \z$.
\end{enumerate}
Thus, the right hand side is fulfilled for all $x \leq_1 y$, and it follows that $x \in \lr(\nec{\leq_1}(W_x) \cup \uaE t) \in I$. Hence, $I \cap k_1(x) \neq \z$.
\end{proof}

\section{Modal definability of doubly ordered frames}

If $F = \klam{X, R_1, \ldots, R_n}$ and $F' = \klam{X', R_1' \ldots, R_n'}$ are binary frames, a mapping $f: X \to X'$ is a \emph{bounded morphism} if
\begin{quote}
\begin{enumerate}
\renewcommand{\theenumi}{\ensuremath{\mathrm{BM}_\arabic{enumi}}}
\item\label{bm1} $xR_iy$ implies $f(x)R_i'f(y)$ for all $1 \leq i \leq n$ and $x,y \in X$.
\item\label{bm2} If $f(x)R_i'y'$, then there exists some $y \in X$ such that $xR_iy$ and $f(y) = y'$.
\end{enumerate}
\end{quote}

\begin{theorem}
The class of doubly ordered frames is not modally definable.
\end{theorem}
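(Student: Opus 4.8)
The plan is to exploit the fact that validity of modal formulas is preserved under surjective bounded morphisms; consequently every modally definable class of binary frames is closed under surjective bounded morphic images. It therefore suffices to produce a single doubly ordered frame $X$ together with a surjective bounded morphism $f$ onto a binary frame $X'$ that fails to be doubly ordered. Since reflexivity and transitivity of a relation are themselves preserved under such morphisms (a one-line back-and-forth argument), any image $X'$ is automatically equipped with two quasiorders, so the only condition $X'$ can afford to violate is condition (2) of the definition of a doubly ordered frame: the requirement that $x \leq_1 y$ and $x \leq_2 y$ force $x = y$. This is the genuinely non-modal axiom, since it asserts an equality governed by two relations at once; it is the exact analogue of antisymmetry, the classical example of a non-modally-definable property.

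Concretely, I would take $X' = \set{a,b}$ with $a \neq b$, and let both $\leq_1'$ and $\leq_2'$ be the partial order in which $a < b$, i.e.\ each relation is the two loops together with the pair $(a,b)$. Both are quasiorders, yet $a \leq_1' b$ and $a \leq_2' b$ with $a \neq b$, so $X'$ violates condition (2) and is not doubly ordered. For the source I would take $X = \set{x,u,v}$, declare $\leq_1$ to be reflexive with the single extra pair $x \leq_1 u$, and $\leq_2$ to be reflexive with the single extra pair $x \leq_2 v$. Both relations are quasiorders, and the only pairs lying in $\leq_1$ and $\leq_2$ simultaneously are the loops, so condition (2) holds and $X$ is doubly ordered. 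The map $f(x) = a$, $f(u) = f(v) = b$ is onto.

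The step requiring care is the back condition \ref{bm2}. Its content here is that the two witnesses demanded of the $a$-point — a $\leq_1$-successor and a $\leq_2$-successor, both mapping to $b$ — are realised in $X$ by the two \emph{distinct} points $u$ and $v$; this separation is precisely what allows $X$ to satisfy condition (2) while its image does not. The forth condition \ref{bm1} is immediate, since no $\leq_i$-pair of $X$ is sent to a non-pair of $X'$, and \ref{bm2} reduces to the observations that $x \leq_1 u$, that $x \leq_2 v$, and that the loops supply all remaining witnesses. Having exhibited $f$ as a surjective bounded morphism from the doubly ordered frame $X$ onto the non-doubly-ordered frame $X'$, we contradict closure of any modally definable class under surjective bounded morphic images, and the theorem follows. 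The main conceptual obstacle is identifying condition (2) as the non-modal axiom and arranging the two back-condition witnesses to be distinct; once the three-point example is in hand, every verification is routine.
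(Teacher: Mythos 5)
Your proposal is correct and is essentially identical to the paper's proof: the same three-point doubly ordered frame (reflexive relations with one extra $\leq_1$-pair and one extra $\leq_2$-pair out of a common source), collapsed onto the same two-point non-doubly-ordered frame by identifying the two distinct witnesses, with the same verification of the back condition. The only cosmetic difference is that you argue directly from preservation of validity under surjective bounded morphisms rather than citing the Goldblatt--Thomason theorem, which is the same underlying fact.
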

\begin{proof}
By the Goldblatt -- Thomason Theorem \cite{gt74} it is enough to show that the class is not closed under bounded morphisms. Let $\F = \klam{X,R_1,R_2}$ be a frame such that $X = \set{x,y,z}$, $R_1 = 1' \cup \set{\klam{x,y}}$, and $R_2 = 1' \cup \set{\klam{x,z}}$; then, $\F$ is a doubly ordered frame. Next, let $\F' = \klam{Y,S_1,S_2}$, where $Y = \set{s,t}$, $S_1 = 1' \cup \set{\klam{s,t}}$, and $S_2 \df S_1$; observe that $\F'$ is not doubly ordered.

 Let $f: X \onto Y$ be defined by $f(x) = s, f(y) = f(z) = t$. Clearly, $f$ preserves $R_1$ and $R_2$, and thus, it satisfies \ref{bm1}.

For \ref{bm2}, let $f(u)S_1v$. We need to find some $w \in X$ such that $uR_1w$ and $f(w) = v$. If $v = s$, then $x = u$. If $v = s$, then set $w = x$, if $v = t$, then set $w = y$. If $f(u) = t$, then $v = t$, and the reflexivity of $R_1$ gives the result. For $R_2$ the procedure is analogous, using $z$ instead of $y$.

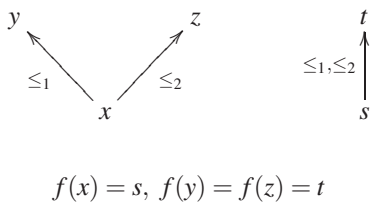
\begin{figure}[htb]
\caption{Doubly ordered frames are not definable by modal operators}\label{fig1}
$$ \xymatrix{
y&&z && t\\
& x\ar@{->}[lu]^{\leq_1} \ar@{->}[ru]_{\leq_2} &&& s \ar@{->}[u]^{\leq_1, \leq_2}
}
$$
\begin{gather*}
f(x) = s, \ f(y) = f(z) = t
\end{gather*}
\end{figure}

Thus $\F'$ is a bounded image of a doubly ordered frame. On the other hand, $s \neq t$ implies that $\F'$ is not doubly ordered.
\end{proof}

\section*{References}
\renewcommand*{\refname}{}
\vspace{-7mm}

\end{document}